\newcommand{\Z}{\mathbb{Z}}
\newcommand{\R}{\mathbb{R}}
\newcommand{\F}{\mathbb{F}_3}
\theoremstyle{definition}
\newtheorem{thm}{Theorem}
\newtheorem{prop}{Proposition}
\newtheorem{cor}{Corollary}
\newtheorem*{proof*}{Proof}
\newtheorem{Ex}{Example}
\begin{document}
\begin{center}
{\Large A characteristic polynomial of the Seidel matrix over $\F$}
\vskip 5mm
\medskip
YUYA SUGISHITA \footnote{Department of Pure and Applied Mathematics, Graduate School of Information Science and Technology, Osaka University; \tt{u977762e@ecs.osaka-u.ac.jp}}
\vspace{0.5cm}\\
\vskip 5mm
\end{center}

\begin{abstract}
In this paper, we consider the condition when the characteristic polynomials of the Seidel matrix of the graphs are decomposed into products of linear polynomials over $\F$. We also show the equality over $\F$ between the characteristic polynomial of the Seidel matrix of a graph and the characteristic polynomial of the adjacency matrix of that graph.
\vskip 5mm
\end{abstract}

\begin{flushleft}
Key words: Seidel matrix, Characteristic polynomial\\
\vspace{0.2cm}
Mathematics Subject Classification: Primary: 05C50, Secondary: 05C90, 05C30
\end{flushleft}

\section{Introduction}
In this paper, we consider undirected simple finite graphs, and denote graphs by $X$ and $Y$, etc. The Seidel matrix and its eigenvalues (see, e.g., \cite{7} and \cite{6}) are useful for studying equiangular lines (see, e.g.,  \cite{3} and \cite{4}) and strongly regular graphs (see, e.g.,  \cite{1}). 

It is stated in \cite{4} that there is no Seidel matrix with the following characteristic polynomials:
\begin{align*}
&(x+5)^{33}(x-9)^{10}(x-11)^5(x^2-20x+95),\\
&(x+5)^{33}(x-9)^{12}(x-11)^4(x-13),\\
&(x+5)^{33}(x-7)(x-9)^9(x-11)^7.
\end{align*}
This indicates that there cannot be $50$ equiangular lines in $\R^{17}$. It is stated in \cite{7} that the spectrum of the Seidel matrices of a $k$-regular graph with $n$ vertices is completely determined from the spectrum of the adjacency matrix of that graph by the following equation: 
\begin{align*}
\det(xI-S(X))=(-1)^n2^n\dfrac{x+1+2k-n}{x+1+2k}\det\left(-\dfrac{x+1}{2}I-A(X)\right).
\end{align*}
We call the eigenvalue of the adjacency matrix \textit{main} if its eigenspace is not orthogonal to the all-ones vector. 
In \cite{6}, it is stated that the main eigenvalue of $S(X)$ and its eigenvectors can be restored from the eigenvalues of $A(X)$, and that the main eigenvalue of $A(X)$ and its eigenvectors can be restored from the eigenvalues of $S(X)$. Also, as in \cite{5}, the Seidel matrix itself is the subject of research. Let $\{\lambda_i\}_{i=1}^n$ be the eigenvalues of the Seidel matrix $S(X)$ and define the \textit{Seidel energy} of $X$ as $\sum_{i=1}^{n}\left|\lambda_i\right|$, moreover $\sum_{i=1}^{n}\left|\lambda_i\right|^\alpha$ is a generalization of Seidel energy of $X$. It is stated in \cite{5} the following inequality is obtained: $\sum_{i=1}^{n}\left|\lambda_i\right|^\alpha\leq (n-1)^\alpha+(n-1)$ for all $\alpha\geq 2$, where $X$ be a graph with $n\geq 3$ vertices, and $\left|\det S(X)\right|\geq n-1$.

The following problems can be considered naturally: 
\begin{enumerate}[(A)]
\item Find the relational expression over $\F$ between the characteristic polynomial of the Seidel matrix and the characteristic polynomial of the adjacency matrix.
\item Determine the exponent of each factor when the characteristic polynomial of the Seidel matrix over $\F$ can be decomposed into the product of linear polynomials.
\item Characterize graphs such that the characteristic polynomial of the Seidel matrix of disjoint union of graphs  $X$ and $Y$ can be represented by the product of the characteristic polynomials of the Seidel matrices of $X$ and $Y$.
\end{enumerate}

In this paper, we partially solve some of these problems. The following are the main theorems. Theorem \ref{th1} partially solves (A) of the above problems, and Theorem \ref{th2} partially solves (B). Also, in Proposition \ref{A} of Section 2, we partially solve (C).

We denote the characteristic polynomial of the matrix $M$ over $\F$ by $\varphi(M)$. We use $\varphi(M,x)$ instead of $\varphi(M)$ when we  want to specify the variable $x$. We denote the adjacency matrix of graph $X$ by $A(X)$, which is defined as $A(X)_{ij}=1$ if the vertices $i$ and $j$ are adjacent, and $A(X)_{ij}=0$ otherwise. Denote the Seidel matrix $J-I-2A(X)$ by $S(X)$, where $J$ is the all-ones matrix and $I$ is the identity matrix. A complete graph on $n$ vertices is denoted by $K_n$. The disjoint union of the graphs $X=(V(X), E(X))$ and $Y=(V(Y), E(Y))$ is denoted by $X\sqcup Y:=(V(X)\sqcup V(Y), E(X)\sqcup E(Y))$. Here, $V(X)$ and $V(Y)$ are sets of vertices of $X$ and $Y$, respectively, and $E(X)$ and $E(Y)$ are sets of edges of $X$ and $Y$, respectively. Also, we denote $n$ disjoint unions of graph $X$ by $nX$.

\begin{thm}\label{th1}
The following holds.
\begin{enumerate}
\item $\varphi(S(3X),x)=\varphi(A(X),x+1)^3$.
\item $\varphi(S(\overline{3X}),x)=\varphi(A(X),-x+1)^3$.
\end{enumerate}
\end{thm}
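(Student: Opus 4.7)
The plan is to exploit the $3\times 3$ block structure that $3X$ induces, together with the arithmetic simplifications $-2 \equiv 1$ and $3 \equiv 0$ available in characteristic $3$. The first identity rewrites $S(3X) \equiv J_{3n} - I_{3n} + A(3X)$, so
\[
xI_{3n} - S(3X) \;\equiv\; (x+1)I_{3n} - J_{3n} - A(3X) \pmod{3}.
\]
Using $A(3X) = I_3 \otimes A(X)$ and $J_{3n} = J_3 \otimes J_n$, I view this as a $3\times 3$ array of $n\times n$ blocks whose diagonal entries are $M := (x+1)I_n - J_n - A(X)$ and whose off-diagonal entries are $-J_n$.

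The first step is to reduce this block matrix to block upper triangular form. Performing the block-column operation $C_1 \leftarrow C_1 + C_2 + C_3$ collapses each entry of the first block column to $M - 2J_n = (x+1)I_n - 3J_n - A(X) = (x+1)I_n - A(X) =: N$, since $3J_n = 0$ in $\F$. Following with the block-row operations $R_i \leftarrow R_i - R_1$ for $i=2,3$ clears everything below the diagonal in column $1$ and simultaneously turns the remaining off-diagonal blocks in rows $2$ and $3$ into $-J_n - (-J_n) = 0$, leaving $N$ on the diagonal. The determinant of the resulting block upper triangular matrix is $\det(N)^3 = \varphi(A(X), x+1)^3$, which proves part (1).

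For part (2), I will use the general identity $S(\overline{Y}) = -S(Y)$, immediate from $A(\overline{Y}) = J - I - A(Y)$. Applying the same block reduction to $xI + S(3X) \equiv (x-1)I_{3n} + J_{3n} + A(3X)$ produces a block upper triangular matrix with diagonal blocks $(x-1)I_n + A(X)$; the off-diagonal $J_n$'s again disappear because $3J_n = 0$. Rewriting $(x-1)I_n + A(X) = -[(-x+1)I_n - A(X)]$, each diagonal block's determinant equals $\varphi(A(X), -x+1)$ up to a sign $(-1)^n$, and cubing then yields the claimed identity. I expect no substantial obstacle beyond keeping the signs straight in part (2); the block manipulation itself is mechanical once one notices that the characteristic-$3$ identity $3J_n = 0$ kills all the off-diagonal $J_n$ contributions.
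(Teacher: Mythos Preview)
Your argument for part~(1) is correct and essentially identical to the paper's: both reduce the $3\times 3$ block array to block-triangular form via elementary block operations, exploiting $3J_n = 0$ in $\F$, and both arrive at $\det\bigl((x+1)I_n - A(X)\bigr)^3$. The only cosmetic difference is that the paper first reduces $\varphi(S(3X))$ to $\det\bigl(xI_n - S(X) + J_n\bigr)^3$ and only then substitutes $S(X) = J - I - 2A(X)$, whereas you substitute at the outset.

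For part~(2), your reduction to $\det\bigl((x-1)I_n + A(X)\bigr)^3$ is also correct and matches the paper. However, your final sentence contains a genuine gap: you assert that each block's determinant equals $\varphi(A(X), -x+1)$ ``up to a sign $(-1)^n$, and cubing then yields the claimed identity.'' Cubing does \emph{not} kill this sign, since $\bigl((-1)^n\bigr)^3 = (-1)^n$ in $\F$. In fact the identity as stated fails whenever $n = |V(X)|$ is odd: take $X = K_1$, so that $\overline{3X} = K_3$; the paper itself records $\varphi(S(K_3)) = (x-1)^3$, whereas $\varphi(A(K_1), -x+1)^3 = (1-x)^3 = -(x-1)^3$ in $\F[x]$. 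The paper's own proof makes the same unjustified leap from $\det\bigl((x-1)I_n + A(X)\bigr)^3$ to $\varphi(A(X),-x+1)^3$; the correct relation is $\varphi(S(\overline{3X}),x) = (-1)^n\,\varphi(A(X),-x+1)^3$.
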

\begin{thm}\label{th2}
Let $r,s,t\in\Z_{\geq 0}$.
\begin{enumerate}
\item If $x^r(x-1)^s(x+1)^t$ is a characteristic polynomial of the Seidel matrix over $\F$ of a graph, then
\begin{itemize}
\item $r\equiv s\equiv t\equiv 0\mod 3$, or
\item $r\equiv 0, s\equiv t\equiv 1\mod 3$, or
\item $r\equiv 1, s\equiv t\equiv 0\mod 3$.
\end{itemize}
\item If $r\leq s+t$ and
\begin{itemize}
\item $r\equiv s\equiv t\equiv 0\mod 3$, or
\item $r\equiv 0, s\equiv t\equiv 1\mod 3$, or
\item $r\equiv 1, s\equiv t\equiv 0\mod 3$,
\end{itemize}
then $x^r(x-1)^s(x+1)^t$ is a characteristic polynomial of the Seidel matrix over $\F$ of a graph.
\end{enumerate}
\end{thm}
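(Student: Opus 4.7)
The plan for Part (1) is a short trace calculation. Writing $n = r + s + t$, the identities $\operatorname{tr}(S(X)) = 0$ and $\operatorname{tr}(S(X)^2) = n(n-1)$ are immediate from the fact that $S(X)$ has zero diagonal and off-diagonal entries $\pm 1$. Under the hypothesis $\varphi(S(X), x) = x^r(x-1)^s(x+1)^t$ over $\F$, the sum of the roots (with multiplicity) is $s - t$ and the sum of squares is $s + t$, so $s - t \equiv 0 \pmod{3}$ and $s + t \equiv n(n-1) \pmod{3}$. Setting $u \equiv r - s \pmod{3}$ and using $s \equiv t$, the second congruence rewrites as $-s \equiv u(u-1) \pmod{3}$; enumerating $u \in \{0, 1, 2\}$ eliminates six of the nine residue classes and leaves exactly the three profiles in the statement.

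For Part (2) I exhibit explicit graphs by combining Theorem~\ref{th1} with Proposition~\ref{A} of Section~2. Specifically, a block-determinant calculation (applying the matrix-determinant lemma to $(x+1)I - A(X \sqcup Y) - \mathbf{1}\mathbf{1}^T$ over $\F$, using $-2 \equiv 1 \pmod 3$) yields
\begin{equation*}
\varphi(S(X \sqcup Y), x) - \varphi(S(X), x)\,\varphi(S(Y), x) \equiv -Q_X(x)\,Q_Y(x) \pmod{3},
\end{equation*}
where $Q_Z(x) := \varphi(A(Z), x+1) - \varphi(S(Z), x)$. I will use Proposition~\ref{A} in the form that the left side vanishes whenever $Q_X \equiv 0$ or $Q_Y \equiv 0 \pmod{3}$; this vanishing holds automatically for $X = 3H$ by Theorem~\ref{th1}(1) and for $X = \overline{3H}$ by Theorem~\ref{th1}(2) (modulo a sign factor $(-1)^{|V(H)|}$ coming from the identity $\varphi(S(\overline{Y}), x) = (-1)^{|V(Y)|}\varphi(S(Y), -x)$, which is harmless over $\F$ when $|V(H)|$ is even).

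The core building blocks are
\begin{align*}
\varphi(S(3K_1), x) &= (x+1)^3, & \varphi(S(K_3), x) &= (x-1)^3, \\
\varphi(S(3K_2), x) &= x^3(x-1)^3, & \varphi(S(\overline{3K_2}), x) &= x^3(x+1)^3
\end{align*}
over $\F$, the last two via Theorem~\ref{th1} with $H = K_2$. Each satisfies the hypothesis of Proposition~\ref{A}, and their Seidel-exponent vectors, in units of $3$, are $(1,1,0)$, $(1,0,1)$, $(0,1,0)$, $(0,0,1)$. For Case~1 of the statement, I set $r = 3r', s = 3s', t = 3t'$ and choose $a, b, c, d \geq 0$ with $a + b = r'$, $a + c = s'$, $b + d = t'$; this has a non-negative integer solution precisely when $r' \leq s' + t'$, which is the hypothesis. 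The graph $a \cdot 3K_2 \sqcup b \cdot \overline{3K_2} \sqcup c \cdot K_3 \sqcup d \cdot 3K_1$ then realizes $x^r(x-1)^s(x+1)^t$ by iterated application of Proposition~\ref{A}. For Case~2 I reduce to Case~1 for $(r, s-1, t-1)$ and take a disjoint union with $2K_1$ (whose Seidel polynomial is $(x-1)(x+1)$): the residue constraints $r \equiv 0 \pmod{3}$ and $s + t \equiv 2 \pmod{3}$ forbid $r \in \{s+t-1, s+t\}$, so $r \leq s+t$ tightens to $r \leq s+t-2$, preserving the Case~1 hypothesis. Case~3 is analogous, reducing to $(r-1, s, t)$ in Case~1 and appending a $K_1$.

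The main obstacle I foresee is the bookkeeping in the reductions, verifying that the residue conditions of the three cases interact cleanly with $r \leq s + t$ so that subtracting $(0,1,1)$ or $(1,0,0)$ from $(r,s,t)$ lands in Case~1 with its own inequality still satisfied. A secondary check is that each of the four building blocks has $Q \equiv 0 \pmod{3}$: this is immediate from Theorem~\ref{th1} for $3K_1$, $3K_2$, and $\overline{3K_2}$, while for $K_3$ it is the direct identity $\varphi(A(K_3), x+1) - \varphi(S(K_3), x) = (x-1)(x+2)^2 - (x+2)(x-1)^2 = 3(x-1)(x+2) \equiv 0 \pmod{3}$.
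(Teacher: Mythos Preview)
Your argument is correct and, at the level of strategy, matches the paper's proof: Part~(1) is the same trace computation (the paper phrases it via the $x^{n-2}$ coefficient, which is $-\tfrac12\operatorname{tr}(S^2)$), and Part~(2) uses the same four building blocks $K_3$, $\overline{K_3}=3K_1$, $3K_2$, $\overline{3K_2}$ together with small starters $K_1$, $2K_1$. The paper organises Part~(2) by quoting Proposition~\ref{D} for the starter pieces and then multiplying in blocks via Proposition~\ref{A}; your reduction of Cases~2 and~3 to Case~1 is a slightly cleaner packaging of the same construction.

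The genuine methodological difference is your $Q$-identity
\[
\varphi(S(X\sqcup Y))-\varphi(S(X))\,\varphi(S(Y))=-Q_X\,Q_Y,\qquad Q_Z=\varphi(A(Z),x+1)-\varphi(S(Z),x),
\]
obtained from the matrix-determinant lemma applied to $xI-S(Z)=(x+1)I-A(Z)-\mathbf{1}\mathbf{1}^T$ over $\F$. This is more conceptual than the paper's explicit row reductions in Proposition~\ref{A}: it explains in one line \emph{why} the Seidel characteristic polynomial is multiplicative whenever one factor has $Q\equiv 0$, and it shows that this property is closed under disjoint union (if $Q_X=Q_Y=0$ then $Q_{X\sqcup Y}=0$), which is exactly what your iterated construction needs.

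One small repair: your justification that $Q_{\overline{3H}}\equiv 0$ ``by Theorem~\ref{th1}(2)'' is not quite right. Theorem~\ref{th1}(2) gives $\varphi(S(\overline{3H}),x)=\varphi(A(H),-x+1)^3$, but you would also need $\varphi(A(\overline{3H}),x+1)=\varphi(A(H),-x+1)^3$, which does not follow from that theorem. The clean fix is already implicit in your setup: writing $Q_Z=\mathbf{1}^T\operatorname{adj}(xI-S(Z))\,\mathbf{1}$ and using $S(\overline{Z})=-S(Z)$ gives $Q_{\overline{Z}}(x)=(-1)^{|V(Z)|-1}Q_{Z}(-x)$, so $Q_{3H}\equiv 0$ (which \emph{does} follow from Theorem~\ref{th1}(1) since $\varphi(A(3H),x+1)=\varphi(A(H),x+1)^3$) immediately yields $Q_{\overline{3H}}\equiv 0$. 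Alternatively, just check $Q_{\overline{3K_2}}=0$ directly, since that is the only $\overline{3H}$ block you use.
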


In Section 2, we define some notations used in this paper and then prepare to prove these two main theorems by computing the characteristic polynomials of various Seidel matrices. In Section 3, we state the two main theorems. In Section 4, we examine cases not covered by the main theorem. 


\section{Preparation}
In this section, we compute the characteristic polynomial  of the Seidel matrix over $\F$ for some graphs.

\begin{Ex}
Here are some examples of the Seidel matrix.
\begin{align*}
S(K_3)&=\begin{pmatrix}
\begin{array}{cccccc} 
0&-1&-1\\
-1&0&-1\\
-1&-1&0\\
\end{array}
\end{pmatrix},
S(3K_2)=\begin{pmatrix}
\begin{array}{cccccc} 
0&-1&1&1&1&1\\
-1&0&1&1&1&1\\
1&1&0&-1&1&1\\
1&1&-1&0&1&1\\
1&1&1&1&0&-1\\
1&1&1&1&-1&0\\
\end{array}
\end{pmatrix},\\
S(\overline{K_3})&=\begin{pmatrix}
\begin{array}{cccccc} 
0&1&1\\
1&0&1\\
1&1&0\\
\end{array}
\end{pmatrix},
S(\overline{3K_2})=\begin{pmatrix}
\begin{array}{cccccc} 
0&1&-1&-1&-1&-1\\
1&0&-1&-1&-1&-1\\
-1&-1&0&1&-1&-1\\
-1&-1&1&0&-1&-1\\
-1&-1&-1&-1&0&1\\
-1&-1&-1&-1&1&0\\
\end{array}
\end{pmatrix}.
\end{align*}
Also, calculate the characteristic polynomial of the above matrices:
\begin{align*}
\varphi(S(K_3))&=(x-1)^3, 
\varphi(S(3K_2))=x^3(x-1)^3,\\
\varphi(S(\overline{K_3}))&=(x+1)^3, \varphi(S(\overline{3K_2}))=x^3(x+1)^3.
\end{align*}
\end{Ex}

The adjacency matrix of the disjoint union of two graphs is the direct sum of each adjacency matrix. On the other hand, that is not true for the Seidel matrix. Hence, the characteristic polynomial of the Seidel matrix of that graph is non-trivial.

\begin{prop}\label{A}
For any graphs $X$ and $Y$, the following holds.
\begin{enumerate}
\item $\varphi(S(3X\sqcup Y))=\varphi(S(3X))\varphi(S(Y))$.
\item $\varphi(S(\overline{3X}\sqcup Y))=\varphi(S(\overline{3X}))\varphi(S(Y))$.
\end{enumerate}
\end{prop}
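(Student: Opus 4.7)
My plan is to perform elementary block row and column operations on $xI - S(3X \sqcup Y)$ over $\F[x]$ to bring it to a block triangular form whose determinant factors transparently. Set $n = |V(X)|$, $m = |V(Y)|$, and view $M := xI - S(3X \sqcup Y)$ as a $4 \times 4$ block matrix with block sizes $n, n, n, m$: the three diagonal blocks coming from the copies of $X$ are each $xI - S(X)$, the six off-diagonal $X$-to-$X$ blocks are each $-J_n$, the $X$-to-$Y$ cross blocks are $-J_{n \times m}$ and $-J_{m \times n}$, and the bottom-right block is $xI - S(Y)$.

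First I would apply the block row operations $R_2 \leftarrow R_2 - R_1$ and $R_3 \leftarrow R_3 - R_1$. Writing $A := xI - S(X) + J_n$, the new block rows $2$ and $3$ become $(-A, A, 0, 0)$ and $(-A, 0, A, 0)$ respectively, so in particular the $X$-to-$Y$ entries in those rows are killed. Then I would apply the column operation $C_1 \leftarrow C_1 + C_2 + C_3$. Here the $\F$-miracle occurs: the last block row's first three entries are each $-J_{m \times n}$ and sum to $-3\,J_{m \times n} \equiv 0 \pmod 3$, so the $(4,1)$-block vanishes; the $(1,1)$-block becomes $(xI - S(X)) - 2J_n \equiv A \pmod 3$; and the $(2,1)$- and $(3,1)$-blocks collapse to $0$. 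The resulting matrix is block upper triangular relative to the $n \mid (2n+m)$ partition with $(1,1)$-block $A$, and its lower-right $(2n+m) \times (2n+m)$ submatrix is itself block lower triangular with diagonal blocks $A, A, xI - S(Y)$. Hence
\[
\varphi(S(3X \sqcup Y), x) \equiv \det(A)^3 \cdot \varphi(S(Y), x) \pmod 3.
\]

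Specializing $Y$ to the empty graph (so that $\varphi(S(Y), x) = 1$ and $3X \sqcup Y = 3X$) identifies $\det(A)^3$ with $\varphi(S(3X), x)$, and substituting back proves (1). Part (2) is analogous: since $S(\overline{3X}) = -S(3X)$ while the $\overline{3X}$-to-$Y$ cross blocks are still $+J$ (disjoint union leaves non-adjacency between the two parts), the same row and column operations reduce $xI - S(\overline{3X} \sqcup Y)$ to block triangular form with $A$ replaced by $A' := xI + S(X) - J_n$, giving $\varphi(S(\overline{3X} \sqcup Y), x) \equiv \det(A')^3 \cdot \varphi(S(Y), x) \pmod 3$; specializing $Y$ again identifies $\det(A')^3$ with $\varphi(S(\overline{3X}), x)$. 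The main thing to get right is the choice of operations: once one notices that the column operation $C_1 \leftarrow C_1 + C_2 + C_3$ exploits $3 \equiv 0 \pmod 3$ to decouple the $Y$-row from the three copies of $X$, the rest is mechanical.
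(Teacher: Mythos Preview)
Your proof is correct and follows essentially the same approach as the paper: both arguments perform elementary block row/column operations on $xI - S(3X\sqcup Y)$ over $\F[x]$ to reach a block triangular form whose determinant is $\det(xI - S(X) + J)^3\cdot\varphi(S(Y))$, and then identify the cube with $\varphi(S(3X))$. The only cosmetic differences are the particular operations chosen (the paper uses $C_1\leftarrow C_1-C_2$, $C_2\leftarrow C_2-C_3$, then $R_2\leftarrow R_2+R_1$, $R_3\leftarrow R_3+R_2$, whereas you use $R_2\leftarrow R_2-R_1$, $R_3\leftarrow R_3-R_1$, then $C_1\leftarrow C_1+C_2+C_3$) and the fact that you identify $\det(A)^3$ with $\varphi(S(3X))$ by specializing $Y$ to the empty graph rather than by a separate direct computation.
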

\begin{proof}~
\begin{enumerate}
\item Note that $2J=-J$ because we are thinking in over $\F$.
\begin{align*}
\varphi(S(3X\sqcup Y))&=
\begin{vmatrix}
\begin{array}{cccc} 
xI-S(X)&-J&-J&-J\\
-J&xI-S(X)&-J&-J\\
-J&-J&xI-S(X)&-J\\
-J&-J&-J&xI-S(Y)\\
\end{array}
\end{vmatrix}\\
&=\begin{vmatrix}
\begin{array}{cccc} 
xI-S(X)+J&O&-J&-J\\
-(xI-S(X))-J&xI-S(X)+J&-J&-J\\
O&-(xI-S(X))-J&xI-S(X)&-J\\
O&O&-J&xI-S(Y)\\
\end{array}
\end{vmatrix}\\
&=\begin{vmatrix}
\begin{array}{cccc} 
xI-S(X)+J&O&-J&-J\\
O&xI-S(X)+J&J&J\\
O&O&xI-S(X)+J&O\\
O&O&-J&xI-S(Y)\\
\end{array}
\end{vmatrix}\\
&=\begin{vmatrix}
\begin{array}{c} 
xI-S(X)+J
\end{array}
\end{vmatrix}^3
\begin{vmatrix}
\begin{array}{c} 
xI-S(Y)
\end{array}
\end{vmatrix}.
\end{align*}
On the other hand, 
\begin{align}\label{E1}
\varphi(S(3X))\nonumber&=
\begin{vmatrix}
\begin{array}{ccc} 
xI-S(X)&-J&-J\\
-J&xI-S(X)&-J\\
-J&-J&xI-S(X)\\
\end{array}
\end{vmatrix}\\
\nonumber&=\begin{vmatrix}
\begin{array}{ccc} 
xI-S(X)+J&-J&-J\\
-(xI-S(X))-J&xI-S(X)&-J\\
O&-J&xI-S(X)\\
\end{array}
\end{vmatrix}\\
\nonumber&=\begin{vmatrix}
\begin{array}{ccc} 
xI-S(X)+J&-J&-J\\
O&xI-S(X)-J&J\\
O&-J&xI-S(X)\\
\end{array}
\end{vmatrix}\\
\nonumber&=\begin{vmatrix}
\begin{array}{ccc} 
xI-S(X)+J&O&-J\\
O&xI-S(X)+J&J\\
O&-(xI-S(X))-J&xI-S(X)\\
\end{array}
\end{vmatrix}\\
&=\begin{vmatrix}
\begin{array}{c} 
xI-S(X)+J
\end{array}
\end{vmatrix}^3.
\end{align}
Therefore, the desired formula is obtained.
\item Note that $S(\overline{X})=-S(X)$. Similar to the above calculation, 
\begin{align*}\varphi(S(\overline{3X}\sqcup Y))=\begin{vmatrix}
\begin{array}{c} 
xI+S(X)-J
\end{array}
\end{vmatrix}^3
\begin{vmatrix}
\begin{array}{c} 
xI-S(Y)
\end{array}
\end{vmatrix}.
\end{align*}
Also,  
\begin{align}\label{E2}
\varphi(S(\overline{3X}))
=\varphi(-S(3X))
=\begin{vmatrix}
\begin{array}{c} 
xI+S(X)-J
\end{array}
\end{vmatrix}^3.
\end{align}
Therefore, the desired formula is obtained.
\end{enumerate}
\end{proof}

\begin{cor}\label{B}
The following holds.
\begin{enumerate}
\item $\varphi(S(\overline{K_3}\sqcup Y))=\varphi(S(\overline{K_3}))\varphi(S(Y))=(x+1)^3\varphi(S(Y))$.
\item $\varphi(S(3K_2\sqcup Y))=\varphi(S(3K_2))\varphi(S(Y))=x^3(x-1)^3\varphi(S(Y))$.
\item $\varphi(S(\overline{3K_2}\sqcup Y))=\varphi(S(\overline{3K_2}))\varphi(S(Y))=x^3(x+1)^3\varphi(S(Y))$.
\end{enumerate}
\end{cor}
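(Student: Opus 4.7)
The plan is to derive all three identities as direct consequences of Proposition \ref{A}, by specializing $X$ to either $K_1$ or $K_2$ and then reading off the values of $\varphi(S(\overline{K_3}))$, $\varphi(S(3K_2))$, and $\varphi(S(\overline{3K_2}))$ from the Example. No new matrix manipulation should be required; the work has already been done in Proposition \ref{A} and in the preceding Example.

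For part (1), I would first observe the isomorphism $\overline{K_3}\cong 3K_1$: the complement of the complete graph on three vertices is the empty graph on three vertices, which is by definition $3K_1$. Applying Proposition \ref{A}(1) with $X=K_1$ then yields
\[
\varphi(S(\overline{K_3}\sqcup Y))=\varphi(S(3K_1))\varphi(S(Y))=\varphi(S(\overline{K_3}))\varphi(S(Y)),
\]
and the Example supplies $\varphi(S(\overline{K_3}))=(x+1)^3$. For part (2), I would apply Proposition \ref{A}(1) directly with $X=K_2$ to obtain $\varphi(S(3K_2\sqcup Y))=\varphi(S(3K_2))\varphi(S(Y))$, and then substitute $\varphi(S(3K_2))=x^3(x-1)^3$ from the Example. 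For part (3), I would apply Proposition \ref{A}(2) with $X=K_2$ to obtain $\varphi(S(\overline{3K_2}\sqcup Y))=\varphi(S(\overline{3K_2}))\varphi(S(Y))$, and substitute $\varphi(S(\overline{3K_2}))=x^3(x+1)^3$.

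Because the entire content of the corollary is packaged in Proposition \ref{A} together with the three pre-computed characteristic polynomials, I anticipate no substantial obstacle. The only minor point that demands attention is recognizing $\overline{K_3}$ as $3K_1$ so that part (1) fits the hypothesis of Proposition \ref{A}(1); once that identification is made, all three parts are immediate specializations.
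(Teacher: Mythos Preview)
Your proposal is correct and matches the paper's intended approach: the paper states this as an immediate corollary of Proposition~\ref{A} with no separate proof, relying on the specializations $X=K_1$ and $X=K_2$ together with the characteristic polynomials already computed in the Example. Your only added observation, that $\overline{K_3}\cong 3K_1$ so that part~(1) fits Proposition~\ref{A}(1), is exactly the identification the reader is expected to make.
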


\begin{prop}\label{C}
$\varphi(S(K_3\sqcup Y))=\varphi(S(K_3))\varphi(S(Y))=(x-1)^3\varphi(S(Y))$.
\end{prop}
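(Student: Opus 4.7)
The plan is to give a block-matrix computation analogous to Proposition~\ref{A}. Writing $S(K_3) = I_3 - J_3$ gives
\[
xI - S(K_3 \sqcup Y) = \begin{pmatrix} (x-1)I_3 + J_3 & -J \\ -J & xI - S(Y) \end{pmatrix},
\]
and the goal is to reduce this to a (block) triangular form whose determinant is transparently $(x-1)^3 \varphi(S(Y))$.

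Performing the column operation $C_1 \to C_1 + C_2 + C_3$ exploits the characteristic-$3$ arithmetic on both blocks simultaneously. In the top three rows each column of $(x-1)I_3 + J_3$ sums to $x + 2 \equiv x - 1 \pmod 3$, while in the bottom $n$ rows the three entries of the $-J$ block sum to $-3 \equiv 0 \pmod 3$. Thus the new first column is $((x-1)\mathbf{1}_3,\mathbf{0})^{\top}$, and factoring $(x-1)$ out of it leaves $(\mathbf{1}_3,\mathbf{0})^{\top}$. The row operations $R_2\to R_2 - R_1$ and $R_3 \to R_3 - R_1$ then replace the second and third rows of the top block by $(0,x-1,0,\mathbf{0})$ and $(0,0,x-1,\mathbf{0})$, respectively, without touching the bottom block.

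A cofactor expansion along the first column (which now has a single nonzero entry, the $1$ in position $(1,1)$) reduces the problem to a block lower triangular matrix with diagonal blocks $(x-1)I_2$ and $xI - S(Y)$, whose determinant is manifestly $(x-1)^2\,\varphi(S(Y))$. Multiplying by the scalar factor $(x-1)$ extracted earlier yields $(x-1)^3\,\varphi(S(Y))$, as claimed.

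As a sanity check, the identity also follows by a one-line observation: since $K_3 = \overline{3K_1}$, Proposition~\ref{C} is the $X = K_1$ instance of Proposition~\ref{A}(2) together with the value $\varphi(S(K_3)) = (x-1)^3$ recorded in the Example. There is no real obstacle to the direct argument; the only subtlety is the characteristic-$3$ cancellation $-3 \equiv 0$ that makes the column sum vanish in the lower block and decouples the two diagonal pieces.
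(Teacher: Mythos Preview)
Your proof is correct and follows essentially the same elimination strategy as the paper's: the paper sums the first three \emph{rows} to exploit the characteristic-$3$ cancellation (obtaining a row $(x-1,x-1,x-1,0,\dots,0)$), then clears the lower-left block and performs column operations to reach a block upper-triangular form, whereas you do the transposed version, summing the first three \emph{columns} and arriving at a block lower-triangular minor. Your closing remark that $K_3=\overline{3K_1}$ makes the statement a special case of Proposition~\ref{A}(2) is a pleasant bonus not noted in the paper.
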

\begin{proof}
\begin{align*}
\varphi(S(K_3\sqcup Y))
&=\begin{vmatrix}
\begin{array}{ccc|c} 
x&1&1&\\
1&x&1&-J\\
1&1&x&\\ \hline
&-J&&xI-S(Y)\\
\end{array}
\end{vmatrix}
=\begin{vmatrix}
\begin{array}{ccc|c} 
x-1&x-1&x-1&0\cdots 0\\
1&x&1&-1\cdots -1\\
1&1&x&-1\cdots -1\\ \hline
&-J&&xI-S(Y)\\
\end{array}
\end{vmatrix}\\
&=(x-1)\begin{vmatrix}
\begin{array}{ccc|c} 
1&1&1&0\cdots 0\\
1&x&1&-1\cdots -1\\
1&1&x&-1\cdots -1\\ \hline
&-J&&xI-S(Y)\\
\end{array}
\end{vmatrix}=
(x-1)\begin{vmatrix}
\begin{array}{ccc|c} 
1&1&1&0\cdots 0\\
1&x&1&-1\cdots -1\\
1&1&x&-1\cdots -1\\ \hline
&O&&xI-S(Y)\\
\end{array}
\end{vmatrix}\\
&=(x-1)\begin{vmatrix}
\begin{array}{ccc|c} 
1&0&0&0\cdots 0\\
1&x-1&0&-1\cdots -1\\
1&0&x-1&-1\cdots -1\\ \hline
&O&&xI-S(Y)\\
\end{array}
\end{vmatrix}
=(x-1)^3\varphi(S(Y)).
\end{align*}
\end{proof}

\begin{Ex}
Here are the Seidel matrices of the graphs found in the proof of Proposition \ref{D}.
\begin{align*}
S(2K_1)&=\begin{pmatrix}
\begin{array}{cccccc} 
0&1\\
1&0\\
\end{array}
\end{pmatrix}, 
S(K_2)=\begin{pmatrix}
\begin{array}{cccccc} 
0&-1\\
-1&0\\
\end{array}
\end{pmatrix}, 
S(K_2\sqcup K_1)=\begin{pmatrix}
\begin{array}{cccccc} 
0&-1&1\\
-1&0&1\\
1&1&0\\
\end{array}
\end{pmatrix},\\
S(K_2\sqcup 2K_1)&=\begin{pmatrix}
\begin{array}{cccccc} 
0&-1&1&1\\
-1&0&1&1\\
1&1&0&1\\
1&1&1&0\\
\end{array}
\end{pmatrix}, 
S(2K_2)=\begin{pmatrix}
\begin{array}{cccccc} 
0&-1&1&1\\
-1&0&1&1\\
1&1&0&-1\\
1&1&-1&0\\
\end{array}
\end{pmatrix},\\
S(2K_2\sqcup K_1)&=\begin{pmatrix}
\begin{array}{cccccc} 
0&-1&1&1&1\\
-1&0&1&1&1\\
1&1&0&-1&1\\
1&1&-1&0&1\\
1&1&1&1&0\\
\end{array}
\end{pmatrix},
S(2K_2\sqcup 2K_1)=\begin{pmatrix}
\begin{array}{cccccc} 
0&-1&1&1&1&1\\
-1&0&1&1&1&1\\
1&1&0&-1&1&1\\
1&1&-1&0&1&1\\
1&1&1&1&0&1\\
1&1&1&1&1&0\\
\end{array}
\end{pmatrix}.
\end{align*}
\end{Ex}

\begin{prop}\label{D}
For $a,b\in\Z_{\geq 0}, \varphi(S(aK_2\sqcup bK_1))$ is given in the table below.
\begin{center}
\begin{tabular}{|c||c|c|c|c|} \hline
$a\backslash b$&$0\mod 3$&$1\mod 3$\\ \hline\hline
$0\mod 3$&$x^a(x-1)^a(x+1)^b$&$x^{a+1}(x-1)^a(x+1)^{b-1}$\\ \hline
$1\mod 3$&$x^{a-1}(x-1)^a(x+1)^{b+1}$&$x^{a-1}(x-1)^{a+2}(x+1)^{b-1}$\\ \hline
$2\mod 3$&$x^{a-1}(x-1)^{a+1}(x+1)^b$&$x^{a-1}(x-1)^a(x+1)^{b-1}(x^2-x-1)$\\ \hline
\end{tabular}
\begin{tabular}{|c||c|c|c|} \hline
$a\backslash b$&$2\mod 3$\\ \hline\hline
$0\mod 3$&$x^a(x-1)^{a+1}(x+1)^{b-1}$\\ \hline
$1\mod 3$&$x^{a-1}(x-1)^a(x+1)^{b-1}(x^2+1)$\\ \hline
$2\mod 3$&$x^{a-1}(x-1)^a(x+1)^{b-1}(x^2+x-1)$\\ \hline
\end{tabular}
\end{center}
Each cell corresponds to $\varphi(S(aK_2\sqcup bK_1))$ when $a$ satisfies the leftmost congruence condition and $b$ satisfies the uppermost congruence condition.
\end{prop}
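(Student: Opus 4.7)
The plan is to combine the reductions from Corollary \ref{B} with direct computation of nine small base cases. Since $\overline{K_3}=3K_1$, parts (1) and (2) of Corollary \ref{B} give the reductions
\[
\varphi(S((a+3)K_2 \sqcup bK_1)) = x^3(x-1)^3 \, \varphi(S(aK_2 \sqcup bK_1)),
\]
\[
\varphi(S(aK_2 \sqcup (b+3)K_1)) = (x+1)^3 \, \varphi(S(aK_2 \sqcup bK_1)).
\]
Writing $a=3k+a_0$ and $b=3l+b_0$ with $a_0,b_0 \in \{0,1,2\}$, iteration yields
\[
\varphi(S(aK_2 \sqcup bK_1)) = x^{3k}(x-1)^{3k}(x+1)^{3l} \, \varphi(S(a_0 K_2 \sqcup b_0 K_1)).
\]

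Thus it suffices to compute $\varphi(S(a_0 K_2 \sqcup b_0 K_1))$ for the nine pairs $(a_0,b_0)\in\{0,1,2\}^2$. Four of these are immediate from the definition: $(0,0),(0,1),(1,0),(0,2)$ yield $1$, $x$, $(x-1)(x+1)$, $(x-1)(x+1)$, respectively. For the five remaining base cases I would evaluate $\det(xI-S)$ directly from the $3\times 3$ through $6\times 6$ Seidel matrices already tabulated in the preceding Example, reducing modulo $3$ as I go. For $(1,1)$ one finds $x^3-3x+2 \equiv (x-1)^3 \pmod 3$. For the cases with $a_0\in\{1,2\}$, the key device is to apply $R_i \to R_i - R_{i+1}$ between the two rows corresponding to each $K_2$ block; this produces a row of the form $(x-1,\,1-x,\,0,\dots,0)$, peeling off one factor of $x-1$ per $K_2$ and reducing the task to a smaller determinant that can be expanded by hand.

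Once these base determinants are computed one finds, besides the trivial ones, $(1,1)$ gives $(x-1)^3$, $(2,0)$ gives $x(x-1)^3$, and $(1,2),(2,1),(2,2)$ produce the irreducible-over-$\F$ quadratic factors $x^2+1$, $x^2-x-1$, $x^2+x-1$ respectively, each attached to the expected linear factors $x^{a_0-1}(x-1)^{a_0}(x+1)^{b_0-1}$. Multiplying the base-case polynomial by the reduction factor $x^{3k}(x-1)^{3k}(x+1)^{3l}$ and substituting $3k=a-a_0$, $3l=b-b_0$ rewrites each product into precisely the expression listed in the table cell indexed by $(a_0 \bmod 3,\, b_0 \bmod 3)$.

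The main obstacle is the three cases $(1,2),(2,1),(2,2)$, since the $4\times4$, $5\times5$, and $6\times6$ determinants over $\F$ produce the quadratic factors and require careful arithmetic (and attention to signs, using $-2=1$ and $2J=-J$); everything else is either definitional or a direct consequence of the already-established Corollary \ref{B}.
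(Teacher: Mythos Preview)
Your proposal is correct and follows essentially the same approach as the paper: reduce modulo~$3$ in each of $a$ and $b$ via Corollary~\ref{B} (parts (1) and (2)), then compute the nine base cases $(a_0,b_0)\in\{0,1,2\}^2$ directly, obtaining exactly the base-case table the paper records. The paper also cites Proposition~\ref{C} in its proof, but since no $K_3$ component appears in $aK_2\sqcup bK_1$ this citation is not actually needed, and your omission of it is harmless.
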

\begin{proof}
This follows from Corollary \ref{B}, Proposition \ref{C} and the table below.
\begin{center}
\begin{tabular}{|c||c|c|c|c|} \hline
$a\backslash b$&$0$&$1$&$2$\\ \hline\hline
$0$&$1$&$x$&$(x-1)(x+1)$\\ \hline
$1$&$(x-1)(x+1)$&$(x-1)^3$&$(x-1)(x+1)(x^2+1)$\\ \hline
$2$&$x(x-1)^3$&$x(x-1)^2(x^2-x-1)$&$x(x-1)^2(x+1)(x^2+x-1)$\\ \hline
\end{tabular}
\end{center}
\end{proof}

\section{Main results}

In this section, we state two main theorems. Practically, from Proposition \ref{A}, we get the following relational expression between the adjacency matrix and the Seidel matrix.

\addtocounter{thm}{-2}
\begin{thm}
The following holds.
\begin{enumerate}
\item $\varphi(S(3X),x)=\varphi(A(X),x+1)^3$.
\item $\varphi(S(\overline{3X}),x)=\varphi(A(X),-x+1)^3$.
\end{enumerate}
\end{thm}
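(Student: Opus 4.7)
The plan is to derive both identities directly from the intermediate determinantal equalities (\ref{E1}) and (\ref{E2}) already established inside the proof of Proposition \ref{A}, namely
$$\varphi(S(3X),x)=\lvert xI-S(X)+J\rvert^{3}\quad\text{and}\quad \varphi(S(\overline{3X}),x)=\lvert xI+S(X)-J\rvert^{3}.$$
All that is needed is to rewrite the two $n\times n$ determinants on the right in terms of the adjacency matrix, using $S(X)=J-I-2A(X)$.

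For part 1, I would substitute $S(X)=J-I-2A(X)$ into $xI-S(X)+J$ and observe that the two copies of $J$ cancel, leaving $(x+1)I+2A(X)$; since $2\equiv -1\pmod 3$, this becomes $(x+1)I-A(X)$, whose determinant is $\varphi(A(X),x+1)$ by definition, and cubing yields the identity. For part 2, the analogous expansion of $xI+S(X)-J$ again causes the $J$'s to cancel and produces $(x-1)I-2A(X)$, which over $\F$ equals $(x-1)I+A(X)$. Pulling out an overall minus sign rewrites this as $-\bigl((-x+1)I-A(X)\bigr)$, so its determinant is $\varphi(A(X),-x+1)$ up to the factor $(-1)^{n}$ coming from $\det(-M)=(-1)^{n}\det M$; cubing gives the second claimed formula.

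The entire argument is essentially bookkeeping, and I expect no real obstacle beyond two small points of care: verifying that the stray $J$ terms really do cancel (they do, already at the level of $\Z$), and tracking the mod-$3$ reductions $\pm 2\equiv\mp 1$ that collapse the $2A(X)$ piece into $\mp A(X)$ so that each determinant can be read off as a genuine characteristic polynomial of $A(X)$ at a shifted argument.
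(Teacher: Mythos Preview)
Your approach is exactly the paper's: invoke equations (\ref{E1}) and (\ref{E2}) from Proposition~\ref{A}, substitute $S(X)=J-I-2A(X)$, let the $J$'s cancel, and reduce $\pm 2A(X)$ to $\mp A(X)$ over $\F$. The paper's proof is line for line what you describe.

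One point deserves more care than either you or the paper give it. You correctly note that $\det\bigl((x-1)I+A(X)\bigr)=(-1)^{n}\,\varphi(A(X),-x+1)$, but cubing does \emph{not} remove the sign: $(-1)^{3n}=(-1)^{n}$ in $\F$, so the factor survives. Concretely, for $X=K_1$ one has $\overline{3X}=K_3$ and $\varphi(S(K_3))=(x-1)^{3}$, whereas $\varphi(A(K_1),-x+1)^{3}=(1-x)^{3}=-(x-1)^{3}$ over $\F$. Thus the identity in part~2 is off by $(-1)^{n}$ as stated. The paper makes the same silent jump from $\lvert(x-1)I+A(X)\rvert^{3}$ to $\varphi(A(X),-x+1)^{3}$, so your write-up matches it; just be aware that the step you flagged as ``up to the factor $(-1)^{n}$'' is not actually discharged by cubing.
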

\begin{proof}~
\begin{enumerate}
\item By the equation (\ref{E1}) in the proof of Proposition \ref{A},
\begin{align*}
\varphi(S(3X),x)&=\begin{vmatrix}
\begin{array}{c} 
xI-S(X)+J
\end{array}
\end{vmatrix}^3
=\begin{vmatrix}
\begin{array}{c} 
xI-(J-I-2A(X))+J
\end{array}
\end{vmatrix}^3\\
&=\begin{vmatrix}
\begin{array}{c} 
(x+1)I-A(X)
\end{array}
\end{vmatrix}^3
=\varphi(A(X),x+1)^3.
\end{align*}
\item By the equation (\ref{E2}) in the proof of Proposition \ref{A},
\begin{align*}
\varphi(S(\overline{3X}),x)&=\begin{vmatrix}
\begin{array}{c} 
xI+S(X)-J
\end{array}
\end{vmatrix}^3
=\begin{vmatrix}
\begin{array}{c} 
xI+(J-I-2A(X))-J
\end{array}
\end{vmatrix}^3\\
&=\begin{vmatrix}
\begin{array}{c} 
(x-1)I+A(X)
\end{array}
\end{vmatrix}^3
=\varphi(A(X),-x+1)^3.
\end{align*}
\end{enumerate}
\end{proof}

\begin{thm}
Let $r,s,t\in\Z_{\geq 0}$.
\begin{enumerate}
\item If $x^r(x-1)^s(x+1)^t$ is a characteristic polynomial of the Seidel matrix over $\F$ of a graph, then
\begin{itemize}
\item $r\equiv s\equiv t\equiv 0\mod 3$, or
\item $r\equiv 0, s\equiv t\equiv 1\mod 3$, or
\item $r\equiv 1, s\equiv t\equiv 0\mod 3$.
\end{itemize}
\item If $r\leq s+t$ and
\begin{itemize}
\item $r\equiv s\equiv t\equiv 0\mod 3$, or
\item $r\equiv 0, s\equiv t\equiv 1\mod 3$, or
\item $r\equiv 1, s\equiv t\equiv 0\mod 3$,
\end{itemize}
then $x^r(x-1)^s(x+1)^t$ is a characteristic polynomial of the Seidel matrix over $\F$ of a graph.
\end{enumerate}
\end{thm}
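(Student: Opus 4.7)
For Part (1), my plan is to read off two trace-based congruences from matrix identities that every Seidel matrix satisfies. Because $S(X)$ has zero diagonal, $\operatorname{tr}(S(X))=0$, and because every off-diagonal entry of $S(X)$ is $\pm 1$, $\operatorname{tr}(S(X)^2)=n(n-1)$, where $n=r+s+t$. If $\varphi(S(X))=x^r(x-1)^s(x+1)^t$ over $\F$, then reading trace and trace-of-squares against the multiset of eigenvalues in $\F$ gives $s-t\equiv 0$ and $s+t\equiv n(n-1)\pmod 3$. The first forces $s\equiv t\pmod 3$, and using $n\equiv r+2s\pmod 3$ the second becomes $(r+2s)(r+2s-1)\equiv 2s\pmod 3$; checking the nine possibilities for $(r,s)\bmod 3$ isolates exactly the three residue triples listed in the theorem.

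For Part (2), my plan is to exhibit each allowed polynomial as the characteristic polynomial of an explicit disjoint union. The four graphs $K_3,\overline{K_3},3K_2,\overline{3K_2}$ from Example 1 have Seidel characteristic polynomials $(x-1)^3,(x+1)^3,x^3(x-1)^3,x^3(x+1)^3$, and each has the form $3X$ or $\overline{3X}$; so Proposition \ref{A} applies to any one of them as a summand of a disjoint union and lets me peel them off one at a time, giving
\[
\varphi\bigl(S\bigl(p(3K_2)\sqcup q(\overline{3K_2})\sqcup b K_3\sqcup c\overline{K_3}\bigr)\bigr)=x^{3(p+q)}(x-1)^{3(b+p)}(x+1)^{3(c+q)}.
\]
For the residue class $(0,0,0)\bmod 3$, writing $r=3a,s=3b',t=3c'$ and choosing $p=\min(a,b')$, $q=a-p$, $b=b'-p$, $c=c'-q$ produces the desired polynomial whenever $a\le b'+c'$, which is precisely $r\le s+t$. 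For the classes $(1,0,0)$ and $(0,1,1)$ I prepend a seed: one copy of $K_1$ (with $\varphi(S(K_1))=x$) supplies the shift $(1,0,0)$, and one copy of $K_2$ (with $\varphi(S(K_2))=(x-1)(x+1)$) supplies the shift $(0,1,1)$. Proposition \ref{A} still applies to every basic block in the remainder of the union, so $\varphi(S(K_i\sqcup H))=\varphi(S(K_i))\varphi(S(H))$, and a short arithmetic check confirms that $r\le s+t$ on the original side implies the $(0,0,0)$-case inequality needed for $H$.

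The step I expect to spend the most care on is exactly this iterated use of Proposition \ref{A}: the seeds $K_1$ and $K_2$ are not themselves of the form $3X$ or $\overline{3X}$, so I cannot apply the proposition with a seed in the privileged role. Instead, I peel off a basic block (which \emph{is} of the required form) on the opposite side at each step, leaving the seed intact until the very end. Once this chain reduction is justified, Part (2) is elementary bookkeeping on residues and counts, and Part (1) is the short case analysis sketched above.
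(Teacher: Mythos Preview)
Your proof is correct and follows essentially the same approach as the paper: Part~(1) uses the trace and trace-of-square constraints (the paper phrases the second via the coefficient $c_n$ of $x^{n-2}$, but this is the same information), and Part~(2) uses the same four building blocks $K_3=\overline{3K_1}$, $\overline{K_3}=3K_1$, $3K_2$, $\overline{3K_2}$ peeled off via Proposition~\ref{A}. Your choice of seeds $K_1$ and $K_2$ is in fact slightly cleaner than the paper's, which uses $2K_2$ for the $(1,0,0)$ residue class (contributing $x(x-1)^3$, hence forcing $s\ge 3$) and then needs a separate $4K_1$ patch to handle $s=0$; your $K_1$ seed avoids this case split entirely.
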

\begin{proof}~
\begin{enumerate}
\item Let $c_n$ be the coefficient of $x^{n-2}$ of the characteristic polynomial of the Seidel matrix of order $n$ over $\F$. We show 
\begin{align*}
c_n=\begin{cases}
0&(n\equiv 0,1\mod 3),\\
-1&(n\equiv 2\mod3).
\end{cases}
\end{align*}
By \cite[Theorem 1]{2}, 
\begin{align*}
c_n&=\dfrac{1}{2}\left\{(\mathrm{tr}(S(X)))^2-\mathrm{tr}(S(X)^2)\right\}
=-\dfrac{1}{2}\mathrm{tr}(S(X)^2)
=-\dfrac{1}{2}\sum_{i=1}^{n}\sum_{j=1}^{n}S(X)_{ij}^2\\
&=-\dfrac{1}{2}\sum_{i=1}^{n}(n-1)=-\dfrac{1}{2}n(n-1).
\end{align*}
Hence, the above is satisfied. Next, 
\begin{align*}
(x-1)^s(x+1)^t=\sum_{k=0}^{s}\sum_{l=0}^{t}\dbinom{s}{k}\dbinom{t}{l}(-1)^{s-k}x^{k+l}
\end{align*}
and hence,
\begin{align*}
c_n=\dbinom{s}{s}\dbinom{t}{t-2}-\dbinom{s}{s-1}\dbinom{t}{t-1}+\dbinom{s}{s-2}\dbinom{t}{t}=\dfrac{1}{2}s(s-1)-st+\dfrac{1}{2}t(t-1).
\end{align*}
The sum of the eigenvalues of a matrix is equal to its trace. Thus, we  may consider $s\equiv t\mod 3$ and examine all cases:
\begin{itemize}
\item $r\equiv s\equiv t\equiv 0\Longrightarrow n\equiv 0, c_n\equiv 0$.
\item $r\equiv 0, s\equiv t\equiv 1\Longrightarrow n\equiv 2, c_n\equiv -1$.
\item $r\equiv 0, s\equiv t\equiv 2\Longrightarrow n\equiv 1, c_n\equiv 1$.
\item $r\equiv 1, s\equiv t\equiv 0\Longrightarrow n\equiv 1, c_n\equiv 0$.
\item $r\equiv s\equiv t\equiv 1\Longrightarrow n\equiv 0, c_n\equiv -1$.
\item $r\equiv 1, s\equiv t\equiv 2\Longrightarrow n\equiv 2, c_n\equiv 1$.
\item $r\equiv 2, s\equiv t\equiv 0\Longrightarrow n\equiv 2, c_n\equiv 0$.
\item $r\equiv 2, s\equiv t\equiv 1\Longrightarrow n\equiv 1, c_n\equiv -1$.
\item $r\equiv s\equiv t\equiv 2\Longrightarrow n\equiv 0, c_n\equiv 1$.
\end{itemize}
Thus, the proof is completed.
\item Let $a,b\in\{0,1,2\},c,d,e,f\in\Z_{\geq 0}$, where $[a=1\Rightarrow b\in\{0,1\}]$ and $[a=2\Rightarrow b=0]$. We consider a graph of the form
\begin{align*}
\mathfrak{X}:=aK_2\sqcup bK_1\sqcup cK_3\sqcup d\overline{K_3}\sqcup e(3K_2)\sqcup f(\overline{3K_2}).
\end{align*}
Then, by Corollary \ref{B}, Proposition \ref{C} and Proposition \ref{D},
\begin{align}\label{S(X)}
\begin{split}
\varphi(S(\mathfrak{X}))=\begin{cases}
x^{3(e+f)}(x-1)^{3(c+e)}(x+1)^{3(d+f)}&(a=b=0),\\
x^{3(e+f)}(x-1)^{3(c+e+1)}(x+1)^{3(d+f)}&(a=b=1),\\
x^{3(e+f)}(x-1)^{3(c+e)+1}(x+1)^{3(d+f)+1}&(a=1,b=0; a=0,b=1),\\
x^{3(e+f)+1}(x-1)^{3(c+e+1)}(x+1)^{3(d+f)}&(a=2,b=0; a=0,b=2).
\end{cases}
\end{split}
\end{align}
Also, by $\varphi(S(4K_1))=x(x+1)^3$, 
\begin{align*}
\varphi(S(4K_1\sqcup f(\overline{3K_2})))=x^{3f+1}(x+1)^{3(f+1)}
\end{align*}
and the proof is completed.
\end{enumerate}
\end{proof}

\section{Further consideration}

In this section, 

Even if $r>s+t$, the following cases are possible.

\begin{prop}\label{propn0}
For $k\in\Z_{\geq 0}$, the following holds.
\begin{enumerate}
\item If $s\equiv t\equiv 0\mod 3$ and 
\begin{itemize}
\item $r=27\dbinom{k+1}{2}$ and $[s\neq 0$ or $t\neq 0]$, or
\item $r=27\dbinom{k+1}{2}+1$ and $[s, t\neq 0$ or $s\neq 0,3]$, or
\item $\Big[r=27\dbinom{k}{2}$ or $r=3\dbinom{3k}{2}\Big]$ and $[s\geq 9k$ or $t\geq 9k]$, or
\item $\Big[r=27\dbinom{k}{2}+1$ or $r=3\dbinom{3k}{2}+1\Big]$ and $[s\geq 3(3k+1)$ or $[s\neq 0$ and $t\geq 9k]]$,
\end{itemize}
then $x^r(x-1)^s(x+1)^t$ is a characteristic polynomial of the Seidel matrix over $\F$ of a graph.
\item If $s\equiv t\equiv 1\mod 3$ and 
\begin{itemize}
\item $r=27\dbinom{k+1}{2}$ and $[s\neq 1$ or $t\neq 1]$, or
\item $r=27\dbinom{k}{2}$ and $[s\geq 9k+1$ or $t\geq 9k+1]$, or
\item $r=3\dbinom{3k}{2}$ and $[s\geq 9k+1$ or $t\geq 9k+1]$,
\end{itemize}
then $x^r(x-1)^s(x+1)^t$ is a characteristic polynomial of the Seidel matrix over $\F$ of a graph.
\end{enumerate}
\end{prop}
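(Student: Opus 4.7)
The plan is to realize each target polynomial as the Seidel characteristic polynomial of an explicit graph of the form $\mathfrak{X}_k \sqcup \mathfrak{X}'$, where $\mathfrak{X}_k$ is a \emph{head block} of the form $3X$ or $\overline{3X}$ that supplies the large $x^r$ factor, and $\mathfrak{X}'$ is a residual graph taken from the construction in the proof of Theorem \ref{th2}(2). By Proposition \ref{A}, the Seidel polynomial of this disjoint union factors over $\F$ as $\varphi(S(\mathfrak{X}_k))\cdot\varphi(S(\mathfrak{X}'))$, and for the cases in which $r$ carries a ``$+1$'' I would additionally adjoin an isolated vertex $K_1$, which contributes a factor $x = \varphi(S(K_1))$ by applying Proposition \ref{A} with $K_1$ in the role of the arbitrary graph $Y$.

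The key new input is the Seidel polynomial of the triangular graphs $L(K_n)$ packaged as $3L(K_n)$ and $\overline{3L(K_n)}$. Recall that $A(L(K_n))$ has integer eigenvalues $2n-4$, $n-4$, $-2$ with multiplicities $1$, $n-1$, $n(n-3)/2$. Reducing modulo $3$: for $n=3k$ the residues are $2,2,1$; for $n=3k+1$ they are $1,0,1$; for $n=3k+2$ they are $0,1,1$. So in each case $\varphi(A(L(K_n)),y)$ reduces over $\F$ to a product of only two prime powers. Substituting $y=x+1$ (respectively $y=-x+1$) and applying Theorem \ref{th1} then yields
\begin{align*}
\varphi(S(3L(K_{3k})),x) &\equiv x^{27\binom{k}{2}}(x-1)^{9k}, & \varphi(S(\overline{3L(K_{3k})}),x) &\equiv x^{27\binom{k}{2}}(x+1)^{9k},\\
\varphi(S(3L(K_{3k+1})),x) &\equiv x^{3\binom{3k}{2}}(x+1)^{9k}, & \varphi(S(\overline{3L(K_{3k+1})}),x) &\equiv x^{3\binom{3k}{2}}(x-1)^{9k},\\
\varphi(S(3L(K_{3k+2})),x) &\equiv x^{27\binom{k+1}{2}}(x+1)^{3}, & \varphi(S(\overline{3L(K_{3k+2})}),x) &\equiv x^{27\binom{k+1}{2}}(x-1)^{3}.
\end{align*}
These six head blocks supply precisely the six values of $r$ (and, via adjunction of $K_1$, the six ``$+1$'' variants) appearing in the proposition.

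From here the proof proceeds case by case. For part~(1), first bullet, the subcase $t\geq 3$ is handled by $3L(K_{3k+2})\sqcup \mathfrak{X}'$ with $\mathfrak{X}'$ realizing $(x-1)^{s}(x+1)^{t-3}$ via Theorem~\ref{th2}(2), and the subcase $s\geq 3$ is handled symmetrically by $\overline{3L(K_{3k+2})}\sqcup \mathfrak{X}'$; together these cover $[s\neq 0$ or $t\neq 0]$. The second bullet is identical but with an extra $K_1$ adjoined. The third and fourth bullets use the $3L(K_{3k})$, $\overline{3L(K_{3k})}$, $3L(K_{3k+1})$, $\overline{3L(K_{3k+1})}$ blocks (optionally with $K_1$) in the same manner; the side conditions $s\geq 9k$, $t\geq 9k$, $s\geq 3(3k+1)$, etc., correspond exactly to the requirement that the leftover exponents stay nonnegative while meeting the congruence conditions of Theorem~\ref{th2}(2). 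Part~(2) is handled identically, except that the residual $\mathfrak{X}'$ is now chosen in the case ``$r_Y\equiv 0,\ s_Y\equiv t_Y\equiv 1\bmod 3$'' of Theorem~\ref{th2}(2).

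The main obstacle is not any single computation but the combinatorial bookkeeping: for each case of the proposition one must choose the correct head block, align the leftover exponents with one of the three allowed residue patterns of Theorem~\ref{th2}, and verify that their nonnegativity is equivalent to the stated side condition. A secondary subtlety is that, whenever $K_1$ is appended, Proposition~\ref{A} must still apply; this is why each head block is taken of the form $3X$ or $\overline{3X}$, so that $K_1$ can legitimately be absorbed into the arbitrary $Y$.
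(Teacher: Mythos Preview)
Your approach is essentially the same as the paper's. Both proofs take $3L(K_n)$ and $\overline{3L(K_n)}$ as the head blocks (the paper derives the Seidel polynomial via the line-graph formula $\varphi(A(L(X)),x)=(x+2)^{e-n}\varphi(A(X),x-k+2)$ applied to $K_n$, which is equivalent to your direct use of the triangular-graph spectrum), adjoin the residual graph $\mathfrak X$ from Theorem~\ref{th2}(2), and split into the three residue classes of $n\bmod 3$ to read off the exponents. The one genuine difference is your handling of the ``$+1$'' in $r$: you adjoin an extra $K_1$, whereas the paper stays inside the $\mathfrak X$-family and uses the $(a,b)=(2,0)$ case of~\eqref{S(X)}, so that the additional factor $x$ comes from $\varphi(S(2K_2))=x(x-1)^3$. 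Your variant is valid (the $K_3$'s and $\overline{K_3}$'s in $\mathfrak X'$ peel off by Proposition~\ref{C} and Corollary~\ref{B}, leaving $\varphi(S(K_1))=x$), and in fact yields slightly looser side conditions in a couple of sub-bullets than the proposition states; the paper's choice produces exactly the stated bounds.
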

\begin{proof}~
By \cite[Lemma 8.2.5]{1},
\begin{align*}
\varphi(A(L(X)),x)=(x+2)^{e-n}\varphi(A(X), x-k+2)
\end{align*}
for a $k$-regular graph $X$ consisting of $n$ vertices and $e$ edges, where $L(X)$ is the line graph of $X$. Therefore, from Theorem \ref{th1},
\begin{align*}
\varphi(S(3L(X)),x)=x^{3(e-n)}\varphi(A(X), x-k)^3.
\end{align*}
Since $\varphi(A(K_n),x)=(x-n+1)(x+1)^{n-1}$,
\begin{align*}
\varphi(S(3L(K_n)),x)=x^{\frac{3}{2}n(n-3)}(x-2n+2)^3(x-n+2)^{3(n-1)}.
\end{align*}
By Theorem \ref{th1}, $\varphi(S(\overline{3L(K_n)}),x)=\varphi(S(3L(K_n)),-x)$.
In the equation (\ref{S(X)}), we look at each of the cases and the proof is completed: If $n\equiv 0\mod 3$, then
\begin{itemize}
\item $a=b=0, e=f=0\\
\Longrightarrow
\varphi(S(3L(K_n)\sqcup\mathfrak{X}),x)=x^{\frac{3}{2}n(n-3)}(x-1)^{3(n+c)}(x+1)^{3d}, \\\hspace*{0.75cm}\varphi(S(\overline{3L(K_n)}\sqcup\mathfrak{X}),x)=x^{\frac{3}{2}n(n-3)}(x-1)^{3c}(x+1)^{3(n+d)}$.
\item $a=1, b=0, e=f=0\\
\Longrightarrow
\varphi(S(3L(K_n)\sqcup\mathfrak{X}),x)=x^{\frac{3}{2}n(n-3)}(x-1)^{3(n+c)+1}(x+1)^{3d+1}, \\\hspace*{0.75cm}\varphi(S(\overline{3L(K_n)}\sqcup\mathfrak{X}),x)=x^{\frac{3}{2}n(n-3)}(x-1)^{3c+1}(x+1)^{3(n+d)+1}$.
\item $a=2, b=0, e=f=0\\
\Longrightarrow
\varphi(S(3L(K_n)\sqcup\mathfrak{X}),x)=x^{\frac{3}{2}n(n-3)+1}(x-1)^{3(n+c+1)}(x+1)^{3d}, \\\hspace*{0.75cm}\varphi(S(\overline{3L(K_n)}\sqcup\mathfrak{X}),x)=x^{\frac{3}{2}n(n-3)+1}(x-1)^{3(c+1)}(x+1)^{3(n+d)}$.
\end{itemize}
If $n\equiv 1\mod 3$, then
\begin{itemize}
\item $a=b=0, e=f=0\\
\Longrightarrow
\varphi(S(3L(K_n)\sqcup\mathfrak{X}),x)=x^{\frac{3}{2}(n-1)(n-2)}(x-1)^{3c}(x+1)^{3(n+d-1)}, \\\hspace*{0.75cm}\varphi(S(\overline{3L(K_n)}\sqcup\mathfrak{X}),x)=x^{\frac{3}{2}(n-1)(n-2)}(x-1)^{3(n+c-1)}(x+1)^{3d}$.
\item $a=1, b=0, e=f=0\\
\Longrightarrow
\varphi(S(3L(K_n)\sqcup\mathfrak{X}),x)=x^{\frac{3}{2}(n-1)(n-2)}(x-1)^{3c+1}(x+1)^{3(n+d-1)+1}, \\\hspace*{0.75cm}\varphi(S(\overline{3L(K_n)}\sqcup\mathfrak{X}),x)=x^{\frac{3}{2}(n-1)(n-2)}(x-1)^{3(n+c-1)+1}(x+1)^{3d+1}$.
\item $a=2, b=0, e=f=0\\
\Longrightarrow
\varphi(S(3L(K_n)\sqcup\mathfrak{X}),x)=x^{\frac{3}{2}(n-1)(n-2)+1}(x-1)^{3(c+1)}(x+1)^{3(n+d-1)}, \\\hspace*{0.75cm}\varphi(S(\overline{3L(K_n)}\sqcup\mathfrak{X}),x)=x^{\frac{3}{2}(n-1)(n-2)+1}(x-1)^{3(n+c)}(x+1)^{3d}$.
\end{itemize}
If $n\equiv 2\mod 3$, then
\begin{itemize}
\item $a=b=0, e=f=0\\
\Longrightarrow
\varphi(S(3L(K_n)\sqcup\mathfrak{X}),x)=x^{\frac{3}{2}(n+1)(n-2)}(x-1)^{3c}(x+1)^{3(d+1)}, \\\hspace*{0.75cm}\varphi(S(\overline{3L(K_n)}\sqcup\mathfrak{X}),x)=x^{\frac{3}{2}(n+1)(n-2)}(x-1)^{3(c+1)}(x+1)^{3d}$.
\item $a=1, b=0, e=f=0\\
\Longrightarrow
\varphi(S(3L(K_n)\sqcup\mathfrak{X}),x)=x^{\frac{3}{2}(n+1)(n-2)}(x-1)^{3c+1}(x+1)^{3(d+1)+1}, \\\hspace*{0.75cm}\varphi(S(\overline{3L(K_n)}\sqcup\mathfrak{X}),x)=x^{\frac{3}{2}(n+1)(n-2)}(x-1)^{3(c+1)+1}(x+1)^{3d+1}$.
\item $a=2, b=0, e=f=0\\
\Longrightarrow
\varphi(S(3L(K_n)\sqcup\mathfrak{X}),x)=x^{\frac{3}{2}(n+1)(n-2)+1}(x-1)^{3(c+1)}(x+1)^{3(d+1)}, \\\hspace*{0.75cm}\varphi(S(\overline{3L(K_n)}\sqcup\mathfrak{X}),x)=x^{\frac{3}{2}(n+1)(n-2)+1}(x-1)^{3(c+2)}(x+1)^{3d}$.
\end{itemize}
\end{proof}

\section*{Acknowledgements}
I am grateful to Professor A. Higashitani for helpful discussions and carefully proofreading the manuscript.

\end{document}